\definecolor{lightgray}{rgb}{0.8, 0.8, 0.8}
\definecolor{darkgray}{rgb}{0.7, 0.7, 0.7}
\definecolor{darkblue}{rgb}{0, 0, .4}
\newcounter{todocounter}
\newtheorem{theorem}{Theorem}
\newtheorem{proposition}[theorem]{Proposition}
\newtheorem{conjecture}[theorem]{Conjecture}
\newenvironment{proof-of-theorem}{\medskip\noindent {\it Proof of Theorem~\ref{thm-gray}.\/}}{\qed\bigskip}
\newfont{\footsc}{cmcsc10 at 8truept}
\newfont{\footbf}{cmbx10 at 8truept}
\newfont{\footrm}{cmr10 at 10truept}
\renewenvironment{abstract}%
                {
                  \begin{list}{}%
                     {\setlength{\rightmargin}{1in}%
                      \setlength{\leftmargin}{1in}}%
                   \item[]\ignorespaces\begin{small}}%
                 {\end{small}\unskip\end{list}}
\newcommand{\st}{\::\:}
\newcommand{\C}{\mathcal{C}}
\newcommand{\OEISlink}[1]{\href{https://oeis.org/#1}{#1}}
\newcommand{\OEISref}{\href{https://oeis.org/}{OEIS}~\cite{sloane:the-on-line-enc:}}
\newcommand\mybullet{\raisebox{-5pt}{\normalsize \ensuremath{\bullet}}}
\newcommand\mycirc{\raisebox{-5pt}{\normalsize \ensuremath{\circ}}}
\def\absdot{\@ifnextchar[{\@absdotlabel}{\@absdotnolabel}}
	\def\@absdotlabel[#1]#2{%
		\node at #2 {\normalsize \mybullet};
		\node at #2 [below=2pt] {\ensuremath{#1}};
	}
	\def\@absdotnolabel#1{%
		\node at #1 {\normalsize \mybullet};
	}
\def\absdothollow{\@ifnextchar[{\@absdothollowlabel}{\@absdothollownolabel}}
	\def\@absdothollowlabel[#1]#2{%
		\node at #2 {\normalsize \textcolor{white}{\mybullet}};
		\node at #2 {\normalsize \mycirc};
		\node at #2 [below=2pt] {\ensuremath{#1}};
	}
	\def\@absdothollownolabel#1{%
		\node at #1 {\normalsize \textcolor{white}{\mybullet}};
		\node at #1 {\normalsize \mycirc};
	}
\newcommand{\plotperm}[1]{
	\foreach \j [count=\i] in {#1} {
		\absdot{(\i,\j)};
	};
}
\newcommand{\plotpermbox}[4]{
	\draw [darkgray, thick, line cap=round]
		({#1-0.5}, {#2-0.5}) rectangle ({#3+0.5}, {#4+0.5});
}
\newcommand{\abs}[1]
  {\left\lvert #1 \right\rvert}
\title{\sc Universal Layered Permutations}
\author{
	\begin{tabular}{cc}
        Michael Albert&Michael Engen\\
		{\small Department of Computer Science}&{\small Department of Mathematics}\\[-3pt]
		{\small University of Otago}&{\small University of Florida}\\[-3pt]
		{\small Dunedin, New Zealand}&{\small Gainesville, Florida USA}\\[20pt]
        Jay Pantone\footnotemark[\value{footnote}]&Vincent Vatter\footnote{Vatter's research was partially supported by the National Security Agency under Grant Number H98230-16-1-0324. The United States Government is authorized to reproduce and distribute reprints not-withstanding any copyright notation herein.}\\
		{\small Department of Mathematics}&{\small Department of Mathematics}\\[-3pt]
		{\small Dartmouth College}&{\small University of Florida}\\[-3pt]
		{\small Hanover, New Hampshire USA}&{\small Gainesville, Florida USA}\\[20pt]
	\end{tabular}
}
\date{}
\begin{document}

\maketitle

\pagestyle{main}

\vspace{-0.6in}

\begin{abstract}
We establish an exact formula for the length of the shortest permutation containing all layered permutations of length $n$, proving a conjecture of Gray.
\end{abstract}

We establish the following result, which gives an exact formula for the length of the shortest $n$-universal permutation for the class of layered permutations, verifying a conjecture of Gray~\cite{gray:bounds-on-super:}. Definitions follow the statement.

\begin{theorem}
\label{thm-gray}
For all $n$, the length of the shortest permutation that is $n$-universal for the layered permutations is given by the sequence defined by
\begin{equation}
\tag{$\dagger$}
\label{eqn-thm-gray}
	a(n) = n + \min\{a(k) + a(n-k-1) \st 0 \le k \le n-1\}
\end{equation}
and $a(0)=0$.
\end{theorem}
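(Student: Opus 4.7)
The plan splits naturally into an upper bound by explicit construction and a lower bound by induction on $n$.

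\textbf{Upper bound.} I construct an $n$-universal layered permutation $\Pi_n$ of length exactly $a(n)$ by recursion. Set $\Pi_0=\emptyset$, and for $n\ge 1$ let $k^*$ attain the minimum on the right-hand side of the recurrence. Define
\[
\Pi_n \;=\; \Pi_{k^*}\oplus \rho_n\oplus \Pi_{n-k^*-1},
\]
where $\rho_n$ is the strictly decreasing permutation $n\,(n{-}1)\cdots 2\,1$ (a single layer). The length is $a(n)$ by construction. For universality, given a layered $\lambda$ of length $n$ with layers $\lambda_1,\ldots,\lambda_r$, locate the unique layer $\lambda_i$ that contains position $k^*+1$; then $\lambda_1\cdots\lambda_{i-1}$ is a layered permutation of length at most $k^*$ (embedded in $\Pi_{k^*}$), $\lambda_i$ is a decreasing sequence of length at most $n$ (embedded in $\rho_n$), and $\lambda_{i+1}\cdots\lambda_r$ is a layered permutation of length at most $n-k^*-1$ (embedded in $\Pi_{n-k^*-1}$). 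The direct-sum structure places the values in the correct increasing bands. A short auxiliary observation is needed: a layered-$m$-universal permutation contains every layered permutation of length at most $m$ (extend shorter ones by appending singleton layers).

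\textbf{Lower bound.} Proceed by strong induction on $n$; the base $n=0$ is trivial. Let $\pi$ be layered-$n$-universal of length $N$. Since $\rho_n$ is itself layered, $\pi$ contains a decreasing subsequence of length $n$; choose one greedily, obtaining positions $q_1<\cdots<q_n$ with values $v_1>\cdots>v_n$ (for definiteness, take the lexicographically smallest tuple of positions). For each $k\in\{0,\ldots,n-1\}$, treat $(q_{k+1},v_{k+1})$ as a pivot and define the lower-left and upper-right quadrants
\[
L_k=\{\,i<q_{k+1}:\pi(i)<v_{k+1}\,\},\qquad R_k=\{\,i>q_{k+1}:\pi(i)>v_{k+1}\,\}.
\]
None of $q_1,\ldots,q_n$ belong to $L_k\cup R_k$ (they lie in the upper-left or lower-right quadrants, or at the pivot), so $N\ge n+|L_k|+|R_k|$. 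The goal is the key claim that $\pi|_{L_k}$ is $k$-universal for layered and $\pi|_{R_k}$ is $(n-k-1)$-universal for layered. Granted this, the inductive hypothesis yields $|L_k|\ge a(k)$ and $|R_k|\ge a(n-k-1)$, so $N\ge n+a(k)+a(n-k-1)$ for every $k$; taking $k$ to be the minimizer recovers $N\ge a(n)$.

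The main obstacle is the key claim. For the $L_k$ half, given any layered $\mu$ of length $k$, the pattern $\mu\oplus\rho_{n-k}$ is layered of length $n$ and so embeds in $\pi$; in any such embedding, $\mu$ sits lower-left of the image of $\rho_{n-k}$. The nontrivial step is arranging that the image of $\rho_{n-k}$ lands exactly on the positions $q_{k+1},\ldots,q_n$, which forces $\mu$ into $L_k$. I expect this to follow from a shifting argument using the greedy choice of the $q_i$: $q_{k+1}$ should be the leftmost position at which a length-$(n-k)$ decreasing subsequence can start inside $\pi$ (compatibly with $q_1,\ldots,q_k$), so any competing length-$(n-k)$ decreasing run can be swapped for $q_{k+1},\ldots,q_n$ without disturbing $\mu$. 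Making this swap precise, along with its symmetric analogue for $R_k$, is where the technical work lives.
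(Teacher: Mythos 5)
Your upper bound is correct and is essentially the paper's construction: form $\Pi_{k^*}\oplus(n\cdots 21)\oplus\Pi_{n-k^*-1}$ and split a layered $\lambda$ of length $n$ at the layer straddling position $k^*+1$. The lower bound, however, contains a genuine gap, and the ``key claim'' you defer is in fact false as stated. Take $n=3$ and $\pi=1\,4\,3\,2\,5$, which is a shortest $3$-universal permutation for the layered class (here $a(3)=5$, since containing both $123$ and $321$ already forces length $5$). The lexicographically least decreasing subsequence of length $3$ occupies positions $q_1,q_2,q_3=2,3,4$ with values $4,3,2$. For $k=0$ the pivot is $(2,4)$, so $R_0$ consists of the single entry with value $5$; but $a(2)=3$, so $\pi|_{R_0}$ is not $2$-universal. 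Thus the assertion ``for each $k$, $\pi|_{L_k}$ is $k$-universal and $\pi|_{R_k}$ is $(n-k-1)$-universal'' cannot be proved. For the conclusion you only need the claim for a single $k$, but your argument supplies no mechanism for selecting a $k$ that works, and for a general (non-layered) $\pi$ the hoped-for ``swap'' of decreasing runs is exactly the hard part.

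The paper sidesteps this difficulty with two moves absent from your outline. First, a separate proposition shows that among the shortest $n$-universal permutations for the layered class there is one that is itself layered (straighten $\pi$ around a maximum decreasing subsequence into $\pi^-\oplus\delta\oplus\pi^+$ and induct on the two corners); one may then write $\pi=\pi_1\oplus\cdots\oplus\pi_\ell$ with decreasing layers, and some single layer satisfies $\abs{\pi_j}\ge n$ because any copy of $n\cdots 21$ must lie inside one layer. Second, rather than proving the prefix is $k$-universal, the paper \emph{defines} $k$ by $a(k)\le\abs{\pi_1\oplus\cdots\oplus\pi_{j-1}}<a(k+1)$, so the prefix bound holds by choice of $k$; only the suffix needs a universality argument, which follows because some layered $\lambda$ of length $k+1$ fails to embed in the prefix, whence for every layered $\mu$ of length $n-k-1$ the pattern $\lambda\oplus\mu$ forces $\mu$ into $\pi_{j+1}\oplus\cdots\oplus\pi_\ell$ (using the greedy-embedding property of layered patterns in layered permutations). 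To complete your proof you would need both of these ingredients, or genuine substitutes for them.
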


Up to shifting indices by $1$, the sequence $a(n)$ in Theorem~\ref{thm-gray} is sequence \OEISlink{A001855} in the \OEISref. It seems to have first appeared in Knuth's \emph{The Art of Computer Programming, Volume 3}~\cite[Section 5.3.1, Eq. (3)]{knuth:the-art-of-comp:3}, where it is related to sorting by binary insertion. Knuth shows there that (in our indexing conventions),
\[
	a(n) = (n+1)\lceil\log_2 (n+1)\rceil -2^{\lceil \log_2 (n+1)\rceil} + 1.
\]
This formula also shows that the minimum in \eqref{eqn-thm-gray} is attained when $k=\lfloor n/2\rfloor$. We refer the reader to the OEIS for further information about this old and interesting sequence. 


The permutation $\pi$ of length $n$, thought of in one-line or list notation, is said to \emph{contain} the permutation $\sigma$ of length $k$ if $\pi$ has a subsequence of length $k$ in the same relative order as $\sigma$. Otherwise, $\pi$ \emph{avoids} $\sigma$. For example, $\pi=34918672$ contains $\sigma=51342$, as witnessed by the subsequence $91672$. We frequently associate the permutation $\pi$ with its \emph{plot}, which is the set $\{(i,\pi(i))\}$ of points in the plane.

A set of permutations that is closed downward under the containment order is called a \emph{permutation class}. Thus, $\C$ is a class if for all $\pi$ in $\C$ and all $\sigma$ contained in $\pi$, $\sigma$ is also in $\C$. If $\pi$ has length $m$ and $\sigma$ has length $n$, we define the \emph{sum} of $\pi$ and $\sigma$, denoted $\pi\oplus\sigma$, as the permutation of length $m+n$ defined by
\[
	(\pi\oplus\sigma)(i)
	=
	\left\{\begin{array}{ll}
	\pi(i)&\mbox{for $1\le i\le m$},\\
	\sigma(i-m)+m&\mbox{for $m+1\le i\le m+n$}.
	\end{array}\right.
\]
The plot of $\pi\oplus\sigma$ can therefore be represented as below.
\begin{center}
	$\pi\oplus\sigma=$
	\begin{tikzpicture}[scale=0.5, baseline=(current bounding box.center)]
		\plotpermbox{1}{1}{1}{1};
		\plotpermbox{2}{2}{2}{2};
		\node at (1,1) {$\pi$};
		\node at (2,2) {$\sigma$};
	\end{tikzpicture}
\end{center}

A permutation is \emph{layered} if it can be expressed as a sum of decreasing permutations, which are themselves called \emph{layers}. For example, the permutation $3214657$ is layered, as it can be expressed as $321\oplus 1\oplus 21\oplus 1$ (note that the sum operation is associative and so there is no ambiguity in this expression).

Given a permutation class $\C$, the permutation $\pi$ is said to be \emph{$n$-universal for $\C$} if $\pi$ contains all of the permutations of length $n$ in $\C$ (the alternate term \emph{superpattern} is sometimes used in the literature). A permutation is said to be simply \emph{$n$-universal} if it contains all permutations of length $n$.

To date, the best bounds on the length of the shortest $n$-universal permutation are that it lies between $n^2/e^2$ (a consequence of Stirling's Formula, because if such a permutation has length $m$, the inequality ${m\choose n}\ge n!$ must hold) and ${n+1\choose 2}$, which was established by Miller~\cite{Miller:Asymptotic-boun:}. Even before Miller's result was established, Eriksson, Eriksson, Linusson, and W\"astlund~\cite{eriksson:dense-packing-o:} conjectured that this length is asymptotic to $n^2/2$.

Theorem~\ref{thm-gray} is concerned with $n$-universal permutations for the class of layered permutations. Until this work, the best bounds on the length of the shortest such permutations were obtained by Gray~\cite{gray:bounds-on-super:}, who showed that the length is between $n\ln n-n+2$ and $n\lfloor \log_2 n\rfloor + n$. Similar bounds were obtained independently by Bannister, Cheng, Devanny, and Eppstein~\cite{bannister:superpatterns-a:}. By Theorem~\ref{thm-gray} we see that this length is asymptotic to $n\log_2 n$.

Our first result establishes that among the shortest $n$-universal permutations for the class of layered permutations, there is one that is layered. This is a key component in our proof of Theorem~\ref{thm-gray}.

\begin{proposition}
\label{prop-layered}
Given any permutation $\pi$ of length $m$, there is a layered permutation of length $m$ that contains every layered permutation contained in $\pi$.
\end{proposition}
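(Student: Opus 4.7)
The plan is to proceed by induction on $m$, with the cases $m\le 1$ trivial. For the inductive step I split on whether $\pi$ admits a nontrivial direct-sum decomposition.

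\emph{Sum-decomposable case.} If $\pi=\alpha\oplus\beta$ with both summands nonempty, I first observe that every layered pattern $\sigma$ contained in $\pi$ splits as a direct sum $\sigma=\sigma_\alpha\oplus\sigma_\beta$, where $\sigma_\alpha$ is a layered pattern of $\alpha$ and $\sigma_\beta$ is a layered pattern of $\beta$. The key point is that each layer of $\sigma$---being a decreasing subsequence of $\pi$---must lie wholly inside $\alpha$ or wholly inside $\beta$: every value in $\beta$ strictly exceeds every value in $\alpha$ while appearing at a later position, so a decreasing subsequence straddling the two would force an ascent. Since the layers of $\sigma$ are ordered by increasing value, those in $\alpha$ precede those in $\beta$. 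Applying the inductive hypothesis separately to $\alpha$ and $\beta$ to produce layered covers $\lambda_\alpha$ and $\lambda_\beta$ of the right lengths, the direct sum $\lambda=\lambda_\alpha\oplus\lambda_\beta$ is layered of length $m$ and covers every layered pattern of $\pi$.

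\emph{Sum-indecomposable case.} Suppose next that $\pi$ is sum-indecomposable. If $\pi$ is the decreasing permutation then $\pi$ is itself a single layer and we take $\lambda=\pi$. Otherwise, $\pi$ contains at least one ascent, $\pi_1>1$, and $\pi_m<m$, and the main work begins. My plan here is to pass to the level of compositions: a layered permutation of length $m$ is encoded by its composition $(\ell_1,\dots,\ell_t)$ of layer lengths, and a layered $(k_1,\dots,k_s)$ embeds in one with composition $(\ell_1,\dots,\ell_t)$ precisely when some subsequence $j_1<\cdots<j_s$ of indices satisfies $\ell_{j_i}\ge k_i$ for every $i$. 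Thus the proposition is equivalent to producing a composition of $m$ that, in this sense, dominates every composition arising from a layered pattern of $\pi$; any shorter dominating composition can then be padded with singleton layers, which only enlarges the set of contained patterns. To construct one, I intend to peel off an extreme value of $\pi$ (the minimum, say, located at some position $p\ge 2$) and combine inductively obtained dominators of the ``front'' and ``back'' sub-permutations with a new middle layer whose length reflects the longest decreasing subsequence of $\pi$ ending at $p$.

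\emph{Main obstacle.} The principal difficulty is controlling the total length of the constructed composition so that it does not exceed $m$ while still covering every layered pattern. A longest decreasing subsequence of $\pi$ need not have contiguous values---for example, every length-$3$ decreasing subsequence of $4231$ skips a value, so no such subsequence is directly usable as a layer of $\lambda$---so naive choices of the middle layer can leave a pattern uncovered. My intended resolution is to choose the deleted entry, the middle layer's length, and the length of the padding adaptively in tandem, so that the total budget is exactly $m$ and every layered pattern of $\pi$ (including those whose long layer uses the deleted entry) has its composition dominated. The bulk of the technical work will lie in verifying this accounting.
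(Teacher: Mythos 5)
Your sum-decomposable case is correct: a decreasing subsequence of $\alpha\oplus\beta$ cannot straddle the two summands, so every layered pattern splits, and the inductive hypothesis applied to each summand does the job. But this case is not where the content of the proposition lies, and your sum-indecomposable case is not a proof --- it is a plan with an explicitly unresolved obstacle. You propose to peel off the minimum entry, insert a ``middle layer'' whose length reflects some longest decreasing subsequence through it, and then ``choose the deleted entry, the middle layer's length, and the length of the padding adaptively in tandem'' so that the accounting works out; you acknowledge that naive choices fail and that ``the bulk of the technical work'' remains. That remaining work is exactly the proposition, so as it stands there is a genuine gap. Moreover, the framing of your obstacle suggests a misconception: you worry that a longest decreasing subsequence of $\pi$ (e.g.\ in $4231$) is not value-contiguous and hence ``not directly usable as a layer.'' But a layer of the target permutation $\lambda$ need not be realized by a value-contiguous decreasing subsequence of $\pi$; what matters is only that every layer of every layered pattern of $\pi$ is a decreasing subsequence of $\pi$, hence has length at most the length of a \emph{maximum} decreasing subsequence $D$.

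That observation is the missing idea. Fix a maximum-length decreasing subsequence $D$ of $\pi$. By maximality, every entry not in $D$ is strictly southwest of some entry of $D$ or strictly northeast of some entry of $D$, and (since $D$ is decreasing) not both; call these sets $D^-$ and $D^+$, so $\abs{D^-}+\abs{D}+\abs{D^+}=m$ and the length budget is exact with no adaptive accounting needed. Replace $\pi$ by $\pi^-\oplus\delta\oplus\pi^+$ where $\delta$ is decreasing of length $\abs{D}$: given an embedding of a layered $\lambda=\lambda_1\oplus\cdots\oplus\lambda_\ell$ into $\pi$, take $j$ maximal with $\lambda_1\oplus\cdots\oplus\lambda_{j-1}$ embedded in $D^-$; then $\lambda_{j+1}\oplus\cdots\oplus\lambda_\ell$ lands in $D^+$, and $\lambda_j$ fits into $\delta$ because $\abs{\lambda_j}\le\abs{D}$. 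Recursing on $\pi^-$ and $\pi^+$ finishes the induction, and this handles all $\pi$ at once without your case split. Your reduction to compositions and the domination order is harmless but does not advance the argument; I would either complete the indecomposable case along these lines or restructure the whole proof around the maximum decreasing subsequence.
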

\begin{proof}
We prove the claim by induction on $m$. Note that the base case is trivial. Let $D$ denote a decreasing subsequence of $\pi$ of maximum possible length. Because $D$ is a maximal decreasing subsequence, every entry of $\pi$ that is not in $D$ must either lie to the southwest of an entry of $D$ or to the northeast of such an entry, but not both. Let $D^-$ denote the set of entries that lie to the southwest of an entry of $D$ and let $D^+$ denote the set of entries that lie to the northeast of such an entry, so that $D$, $D^-$, and $D^+$ together constitute a partition of the entries of $\pi$. An example of this decomposition is shown on the leftmost panel of Figure~\ref{fig-gray}.

Define $\pi^-$ (resp., $\pi^+$) to be the permutation in the same relative order as the entries of $D^-$ (resp., $D^+$), let $\delta = \abs{D}\cdots 21$, and set $\pi^\ast = \pi^-\oplus\delta\oplus\pi^+$. Thus in some sense $\pi^\ast$ is a ``straightened-out'' version of $\pi$; an example is shown in the central panel of Figure~\ref{fig-gray}.

\begin{figure}
\begin{center}
	\begin{tikzpicture}[scale=0.25, baseline=(current bounding box.center)]
		\draw [darkgray, very thick, fill=lightgray] (0.5,11.5) rectangle (4,10);
		\draw [darkgray, very thick, fill=lightgray] (4,10) rectangle (6,9);
		\draw [darkgray, very thick, fill=lightgray] (6,9) rectangle (8,8);
		\draw [darkgray, very thick, fill=lightgray] (8,8) rectangle (9,7);
		\draw [darkgray, very thick, fill=lightgray] (9,7) rectangle (11,2);
		\draw [darkgray, very thick, fill=lightgray] (11,2) rectangle (11.5,0.5);
		\plotperm{3, 5, 4, 10, 1, 9, 6, 8, 7, 11, 2};
		\draw [darkgray, very thick] (0.5,0.5) rectangle (11.5,11.5);
		\node at (5.5,4.5) {$D^-$};
		\node at (10.25,9.25) {$D^+$};
	\end{tikzpicture}
\quad\quad\quad
	\begin{tikzpicture}[scale=0.25, baseline=(current bounding box.center)]
		\plotperm{2,4,3,5,1,10,9,8,7,6,11};
		\draw [darkgray, very thick] (0.5,0.5) rectangle (11.5,11.5);
		\draw [darkgray, very thick] (0.5,0.5) rectangle (5.5,5.5);
		\draw [darkgray, very thick] (5.5,5.5) rectangle (10.5,10.5);
		\draw [darkgray, very thick] (10.5,10.5) rectangle (11.5,11.5);
	\end{tikzpicture}
\quad\quad\quad
	\begin{tikzpicture}[scale=0.25, baseline=(current bounding box.center)]
		\plotperm{1,4,3,2,5,10,9,8,7,6,11};
		\draw [darkgray, very thick] (0.5,0.5) rectangle (11.5,11.5);
		\draw [darkgray, very thick] (0.5,0.5) rectangle (5.5,5.5);
		\draw [darkgray, very thick] (5.5,5.5) rectangle (10.5,10.5);
		\draw [darkgray, very thick] (10.5,10.5) rectangle (11.5,11.5);
	\end{tikzpicture}
\end{center}
\caption{The steps in the proof of Proposition~\ref{prop-layered}. From left to right, the drawings show an example of $\pi$, of $\pi^\ast$, and of the layered permutation $\tau^-\oplus\delta\oplus\tau^+$.}
\label{fig-gray}
\end{figure}
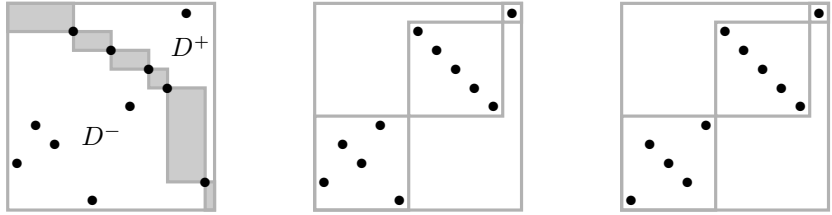

We claim that every layered permutation contained in $\pi$ is also contained in $\pi^\ast$. Suppose $\lambda=\lambda_{1}\oplus\cdots\oplus\lambda_\ell$ is a layered permutation contained in $\pi$, where each $\lambda_{i}$ is a decreasing permutation, and fix an embedding of $\lambda$ into $\pi$. Choose $j$ maximally so that in this embedding of $\lambda$ into $\pi$, the layers $\lambda_{1}\oplus\cdots\oplus\lambda_{j-1}$ are embedded entirely using entries in $D^-$. It follows that the entries of $\lambda_{j+1}\oplus\cdots\oplus\lambda_\ell$ are embedded entirely using entries of $D^+$. Since $\lambda_j$ certainly embeds into $D$ and consequently $\delta$, we have that $\lambda \le \pi^\ast$.

Finally, by induction we see that there are layered permutations $\mu^-$ and $\mu^+$ which contain all of the layered permutations contained in $\pi^-$ and $\pi^+$, respectively. It follows that $\mu^- \oplus \delta \oplus \mu^+$ is layered and contains all of the layered permutations contained in $\pi^\ast$, which in turn contains all of the layered permutations contained in $\pi$, proving the proposition. An example of this final construction is shown in the rightmost panel of Figure~\ref{fig-gray}.
\end{proof}

Another fact which is necessary for the proof of Theorem~\ref{thm-gray} is that when determining whether the layered permutation $\lambda = \lambda_{1}\oplus\cdots\oplus\lambda_\ell$ (where each $\lambda_i$ is a decreasing permutation) is contained in the layered permutation $\pi$, it suffices to take a greedy approach: embed $\lambda_1$ as far to the left as possible, then embed $\lambda_2$ as far left as possible, and so on. We are now ready to prove our main result.

\begin{proof-of-theorem}
We prove the theorem by induction on $n$. As the base case is trivial, suppose that the statement is true for all values less than $n$.

Let $a(n)$ denote the length of the shortest permutation which is $n$-universal for the layered permutations. First we show that
\[
	a(n) \le n + \min\{a(k) + a(n-k-1) \st 0 \le k \le n-1\}.
\]
Choose $k$ so that $a(k) + a(n-k-1)$ is minimized. Then choose a permutation $\sigma$ of length $a(k)$ containing all layered permutations of length $k$ and a permutation $\tau$ of length $a(n-k-1)$ containing all layered permutations of length $n-k-1$. We claim that the permutation $\pi=\sigma\oplus (n\cdots 21)\oplus\tau$ is $n$-universal for the layered permutations. Consider any layered permutation $\lambda=\lambda_1\oplus\cdots\oplus\lambda_\ell$, where each $\lambda_i$ is a decreasing permutation, of length $n=\abs{\lambda_1}+\cdots+\abs{\lambda_\ell}$, and choose $j$ so that
\[
	\begin{array}{lccclclcl}
		\abs{\lambda_1}&+&\cdots&+&\abs{\lambda_{j-1}}&&&\le& k,\\
		\abs{\lambda_1}&+&\cdots&+&\abs{\lambda_{j-1}}&+&\abs{\lambda_j}&>& k.
	\end{array}
\]
Note that this implies that $\abs{\lambda_{j+1}}+\cdots+\abs{\lambda_\ell}\le n-k-1$. Since $\sigma$ is $k$-universal for the layered permutations, it contains $\lambda_1\oplus\cdots\oplus\lambda_{j-1}$. Clearly $\lambda_j$ is contained in $n \cdots 21$. Finally, as $\tau$ is $(n-k-1)$-universal for the layered permutations, it contains $\lambda_{j+1}\oplus\cdots\oplus\lambda_\ell$. This verifies that $\pi$ contains $\lambda$, as desired.

To establish the reverse inequality, suppose that the permutation $\pi$ is $n$-universal for the class of layered permutations and is as short as possible subject to this constraint. Proposition~\ref{prop-layered} allows us to assume that $\pi$ is itself layered, so we may decompose $\pi$ as $\pi_1\oplus\cdots\oplus\pi_\ell$ where each $\pi_i$ is a decreasing permutation.

Because $\pi$ contains all layered permutations of length $n$, it contains $n\cdots 21$. Moreover, as every embedding of $n\cdots 21$ into $\pi$ may use entries from only one layer, there must be some index $j$ such that $\abs{\pi_j} \ge n$. Now choose $k$ so that
\[
	a(k) \le \abs{\pi_1\oplus\cdots\oplus\pi_{j-1}} < a(k+1).
\]
Thus there is at least one layered permutation $\lambda$ of length $k+1$ that does not embed into $\pi_1\oplus\cdots\oplus\pi_{j-1}$. Therefore the earliest that $\lambda$ may embed into $\pi$ is if it embeds into $\pi_1\oplus\cdots\oplus\pi_j$. Now let $\mu$ denote an arbitrary layered permutation of length $n-k-1$. Because $\lambda\oplus\mu$ has length $n$, it must embed into $\pi$, and by our observations about where $\lambda$ may embed into $\pi$ we see that $\mu$ must embed into $\pi_{j+1}\oplus\cdots\oplus\pi_\ell$. As $\mu$ was an arbitrary layered permutation of length $n-k-1$, $\pi_{j+1}\oplus\cdots\oplus\pi_\ell$ is therefore $(n-k-1)$-universal for the class of layered permutations, so we have
\begin{eqnarray*}
	\abs{\pi}
	&=&
	\abs{\pi_{1}\oplus\cdots\oplus\pi_{j-1}}+\abs{\pi_j}+\abs{\pi_{j+1}\oplus\cdots\oplus\pi_\ell},\\
	&\ge&
	a(k)+n+a(n-k-1)\\
	&\ge&
	a(n),
\end{eqnarray*}
completing the proof of the theorem.
\end{proof-of-theorem}

Proposition~\ref{prop-layered} implies that among the shortest permutations which are $n$-universal for the layered permutations, there is at least one that is itself layered. Computation shows that this property is at least slightly unusual. For example, one of the $5$-universal permutations of minimum length for the class of $231$-avoiding permutations is
\[
	1\ 5\ 11\ 9\ 3\ 2\ 8\ 4\ 7\ 6\ 10,
\]
However, this permutation contains $231$ and there is no $231$-avoiding permutation of length $11$ which is $5$-universal for the class of $231$-avoiding permutations. The shortest $231$-avoiding permutations which are $5$-universal for the class of $231$-avoiding permutations instead have length $12$; one such permutation is
\[
	1\ 11\ 3\ 2\ 10\ 7\ 5\ 4\ 6\ 9\ 8\ 12.
\]

On the other hand, computational evidence leads us to suspect that the class of $321$-avoiding permutations does share this property with the class of layered permutations:

\begin{conjecture}
For all $n$, among the shortest permutations that are $n$-universal for the class of $321$-avoiding permutations there is at least one which avoids $321$ itself.
\end{conjecture}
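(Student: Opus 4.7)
My plan is to attempt the analog of Proposition~\ref{prop-layered} for the $321$-avoiding class: for every permutation $\pi$ of length $m$, there is a $321$-avoiding permutation of length $m$ containing every $321$-avoiding pattern contained in $\pi$. Applied to a shortest permutation $\pi$ that is $n$-universal for the $321$-avoiding permutations, this would produce a $321$-avoiding permutation of length $|\pi|$ with the same universality property, yielding the conjecture.

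The approach is to induct on $m$, imitating the decomposition used in the proof of Proposition~\ref{prop-layered}. Since $\pi$ avoids $321$ if and only if its longest decreasing subsequence has length at most $2$, I would let $D$ denote a longest decreasing subsequence of $\pi$, of length $r$. If $r \le 2$ then $\pi$ is already $321$-avoiding. Otherwise, following the Proposition, I would partition the remaining entries into $D^-$ and $D^+$ (southwest and northeast of entries of $D$), recursively replace $\pi^-$ and $\pi^+$ by $321$-avoiding permutations $\mu^-$ and $\mu^+$ of the same lengths, and then replace the $r$ entries of $D$ themselves by a $321$-avoiding ``gadget'' $\gamma$ of length $r$ — for example two increasing runs obtained by splitting $D$ roughly in half by value. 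The candidate construction would then be $\mu^- \oplus \gamma \oplus \mu^+$, and one has to prove that every $321$-avoiding pattern contained in $\pi$ re-embeds into this construction.

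The main obstacle is precisely this last step. In the layered setting each embedded layer interacts with the partition $(D^-, D, D^+)$ in a very rigid way: only one layer can straddle $D$, and since both it and $D$ are decreasing, it fits directly into $\delta$. In the $321$-avoiding setting an embedded pattern is a shuffle of two increasing subsequences, either or both of which can pass through entries of $D$ while also drawing from $D^-$ and $D^+$, so the clean ``only one piece crosses $D$'' argument fails and a naive gadget loses patterns. I expect the proof to require a substantially more refined partition of the entries of $\pi$, perhaps tracking which pairs of entries can simultaneously participate in the two increasing runs of an embedded $321$-avoiding pattern, or alternatively a bubble-sort-style local move argument that removes a single $321$ triple at a time while maintaining the invariant that no $321$-avoiding pattern is destroyed. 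Identifying the right invariant, or equivalently the right gadget together with a way of rerouting embedded patterns through it, is likely the main technical difficulty, and is plausibly why the statement appears here only as a conjecture.
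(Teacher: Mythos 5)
This statement is left as a conjecture in the paper --- the authors offer only computational evidence, no proof --- so there is no ``paper's proof'' to match, and your proposal does not supply one either. You have correctly identified the natural line of attack (a $321$-avoiding analogue of Proposition~\ref{prop-layered}, which would immediately yield the conjecture just as the Proposition feeds into Theorem~\ref{thm-gray}), and you have also correctly identified exactly where it breaks: the re-embedding step. But identifying the obstruction is not the same as overcoming it, and as written the argument has a genuine gap at its central claim, namely that every $321$-avoiding pattern of $\pi$ survives the passage to $\mu^-\oplus\gamma\oplus\mu^+$.

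To make the gap concrete: the layered proof works because the layers of an embedded $\lambda$ are totally ordered by the sum decomposition, so at most one layer straddles $D$, and that layer, being decreasing, drops into $\delta$. A $321$-avoiding pattern, by contrast, is a merge of two increasing subsequences, and the ``straightening'' $\pi\mapsto\pi^-\oplus\gamma\oplus\pi^+$ is not containment-preserving for such patterns. The decomposition into $D^-$ and $D^+$ only guarantees that each non-$D$ entry is southwest or northeast of \emph{some} entry of $D$; two entries $x\in D^-$ and $y\in D^+$ may sit in either order relative to one another in $\pi$, and an embedded $321$-avoiding pattern may use the pair $(y,x)$ with $y$ preceding $x$. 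After straightening, every $D^-$ entry precedes every $D^+$ entry, so that occurrence is destroyed and there is no a priori reason a replacement exists. Your suggested remedies (a finer partition, or a local-move argument removing one $321$ occurrence at a time while preserving all $321$-avoiding subpatterns) are reasonable directions, but neither is carried out, and for the local-move version you would still need to exhibit the invariant and verify it survives each move. As it stands, the proposal is a well-informed research plan, not a proof, which is consistent with the statement's status in the paper.
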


\bibliographystyle{acm}
\bibliography{../../refs}

\end{document}